\documentclass{birkjour}%
\usepackage{amsmath}
\usepackage{amsfonts}
\usepackage{amssymb}
\usepackage[square, numbers]{natbib}
\usepackage{graphicx}
\usepackage{hyperref}%
\newtheorem{theorem}{Theorem}

\newtheorem{lemma}[theorem]{Lemma}

\begin{document}
	
	\title[Decay rates for (2+1)-dimensional oscillatory integral operators]{Decay rates for (2+1)-dimensional oscillatory integral operators with homogeneous polynomial phases}
	\author{Jayden Lang}\address{American Heritage School\\Plantation, FL 33325\\USA}\email{jaydenlang88@gmail.com}
	\author{Wan Tang}\address{Department of Biostatistics and Data Science\\Tulane University\\New Orleans, LA 70112\\USA}\email{wtang1@tulane.edu}

	\begin{abstract}%

		Consider the oscillatory integral operators
		\begin{equation}
			T_{\lambda}f(y)=\int_{\mathbb{R}^{2}}e^{i\lambda S\left(  x_{1},x_{2}%
				,y\right)  }\Phi(x_{1},x_{2},y)f(x_{1},x_{2})dx_{1}dx_{2},\nonumber
		\end{equation}
		where $\Phi(x_{1},x_{2},y)\in C_{0}^{\infty}\left(  \mathbb{R}^{3}\right)  $,
		$S\left(  x_{1},x_{2},y\right)  \in C_{0}^{\infty}\left(  \mathbb{R}%
		^{3}\right)  $ is real valued, and $\lambda$ is a large real number. We prove
		that, if $S\left(  x_{1},x_{2},y\right)  =y^{n_{1}}h_{n-n_{1}}\left(
		x_{1},x_{2}\right)  +\cdots+y^{n_{s}}h_{n-n_{s}}\left(  x_{1},x_{2}\right)  $
		is a homogeneous polynomial of degree $n,$ where $0<n_{1}<n_{2}<\cdots
		<n_{s}<n$, and $h_{n-n_{1}}\left(  x_{1},x_{2}\right)  $ and $h_{n-n_{s}%
		}\left(  x_{1},x_{2}\right)  $ are non-degenerate in the sense that there are
		no multiple factors when they are factored into linear terms over complex
		numbers, then for $\delta=\max \left(  \frac{n}{3},\frac{n-n_{s}}{2}%
		,n_{1}\right)  >1,$ $\left \Vert T_{\lambda}\right \Vert _{L^{2}\rightarrow
			L^{2}}=O\left(  \lambda^{-1/\left(  2\delta \right)  }\right)  $, while in the
		endpoint case $\delta=1$ the bound becomes $\left \Vert T_{\lambda}\right \Vert
		_{L^{2}\rightarrow L^{2}}=O\left(  \lambda^{-1/2}\log \lambda \right)  $. The
		decay rate is sharp, up to a power of $\log \lambda$ when $\delta=1$. We
		further show that $\delta$ is exactly the modified Newton distance for the
		phase function, thus verifies the conjecture of \citet{Greenleaf07} in this case.%
		
	\end{abstract}%
	\maketitle
	
\section{Introduction}

In this paper, we consider the $(2+1)$-dimensional oscillatory integral
operators with homogeneous polynomial phase functions. In general, a
$(m+n)$-dimensional oscillatory integral operators is defined by
\begin{equation}
T_{\lambda}f(\mathbf{y})=\int_{\mathbb{R}^{m}}e^{i\lambda S\left(
\mathbf{x},\mathbf{y}\right)  }\Phi(\mathbf{x},\mathbf{y})f(\mathbf{x}%
)\mathrm{d}\mathbf{x}, \label{ggg}%
\end{equation}
where $\mathbf{x}\in \mathbb{R}^{m}$ and $\mathbf{y}\in \mathbb{R}^{n},$ the
amplitude $\Phi \left(  \mathbf{x},\mathbf{y}\right)  \in C_{0}^{\infty}\left(
\mathbb{R}^{m+n}\right)  $, the phase function $S\left(  \mathbf{x}%
,\mathbf{y}\right)  \in C_{0}^{\infty}\left(  \mathbb{R}^{m+n}\right)  $ is
real valued, and $\lambda$ is a large real number. Understanding the L$^{2}%
$-operator norm decay of such oscillatory integral operators as $\lambda$ goes
to infinity is a central problem in harmonic analysis, with deep connections
to the smoothing properties of Fourier integral operators and generalized
Radon transforms. The decay rate is closely related to the singularity of the
phase function. At a nondegenerate point where the mixed Hessian matrix has
full rank, i.e., rank$\left(  \frac{\partial^{2}S\left(  \mathbf{x}%
,\mathbf{y}\right)  }{\partial \mathbf{x}\partial \mathbf{y}^{T}}\right)
=\min \left(  m,n\right)  ,$ one has $\left \Vert T_{\lambda}\right \Vert
_{L^{2}\left(  \mathbb{R}^{m}\right)  \rightarrow L^{2}\left(  \mathbb{R}%
^{n}\right)  }= O(\lambda^{-\frac{1}{2}\min \left(  m,n\right)  }),$ provided
that the amplitude function is supported in a sufficiently small neighborhood
of the point \citep{hormander73}. The situation becomes more complicated when
the point is degenerate.

In the ($1+1$)-dimensional case, the question has been completely settled by
\citet{Rychkov01} and \citet{Greenblatt05} for smooth phase functions.
However, the question in higher-dimensional settings is far less understood,
even for homogeneous polynomial phase functions. \citet{Fu99} considered some
(2+$m$)-dimensional cases with $m\geq2$, where the phase is given by $S\left(
\mathbf{x},\mathbf{y}\right)  =x_{1}\phi_{1}\left(  \mathbf{y}\right)
+x_{2}\phi_{2}\left(  \mathbf{y}\right)  ,$ and both $\phi_{1}\left(
\mathbf{y}\right)  $ and $\phi_{2}\left(  \mathbf{y}\right)  $ homogenous
polynomials of the same degree in $\mathbb{R}^{m}$. \citet{Greenleaf07}
further considered homogeneous phase functions in the case $\min \left(
m,n\right)  \geq2,$ under the \textquotedblleft rank one\textquotedblright%
\ condition that rank$\left(  \frac{\partial^{2}S\left(  \mathbf{x}%
,\mathbf{y}\right)  }{\partial \mathbf{x}\partial \mathbf{y}^{T}}\right)  \geq1$
for all $(\mathbf{x},\mathbf{y})\neq \mathbf{0}$.

In the (2+1)-dimensional cases, where $S\left(  x_{1},x_{2},y\right)
=y^{n_{1}}h_{n-n_{1}}\left(  x_{1},x_{2}\right)  +\cdots+y^{n_{s}}h_{n-n_{s}%
}\left(  x_{1},x_{2}\right)  ,$ $h_{m}\left(  x_{1},x_{2}\right)  $ is a
homogeneous polynomial of degree $m$ in $x_{1}$ and $x_{2}$, $0<n_{1}%
<n_{2}<\cdots<n_{s}<n$, \citet{tang06} obtained the optimal decay rate when
$n_{1}\leq \frac{n}{3}$, $n_{s}\geq \frac{n}{3},$ and both $h_{n-n_{1}}\left(
x_{1},x_{2}\right)  $ and $h_{n-n_{s}}\left(  x_{1},x_{2}\right)  $ are
non-degenerate in the sense that there are no multiple factors when they are
factored into linear terms over complex numbers. In this paper, we prove the
following theorems in more general situations.%

\begin{theorem}%
Let $S\left(  x_{1},x_{2},y\right)  =y^{n_{1}}h_{n-n_{1}}\left(  x_{1}%
,x_{2}\right)  +\cdots+y^{n_{s}}h_{n-n_{s}}\left(  x_{1},x_{2}\right)  ,$
where $h_{m}\left(  x_{1},x_{2}\right)  $ is a homogeneous polynomial of
degree $m$ in $x_{1}$ and $x_{2}$, $0<n_{1}<n_{2}<\cdots<n_{s}<n$, and both
$h_{n-n_{1}}\left(  x_{1},x_{2}\right)  $ and $h_{n-n_{s}}\left(  x_{1}%
,x_{2}\right)  $ are non-degenerate. Then, for the (2+1) dimensional
oscillatory integral operator defined in (\ref{ggg}) we have
\begin{equation}
\left \Vert T_{\lambda}\right \Vert _{L^{2}\left(  \mathbb{R}^{2}\right)
\rightarrow L^{2}\left(  \mathbb{R}\right)  }\leq \left \{
\begin{array}
[l]{l}%
C\lambda^{-1/\left(  2\delta \right)  }\text{, if }\delta>1,\\
C\lambda^{-1/2}\log \left(  \lambda \right)  \text{, if }\delta=1,
\end{array}
\right.  \text{ }\nonumber
\end{equation}
where $\delta=\max \! \left(  \frac{n}{3},\frac{n-n_{s}}{2},n_{1}\right)  $ and
$C$ is a constant independent of $\lambda$.%
\end{theorem}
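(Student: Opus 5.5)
We use the $TT^{*}$ method together with a dyadic decomposition adapted to the homogeneity of $S$. Since $\|T_\lambda\|^2=\|T_\lambda T_\lambda^{*}\|$ and $T_\lambda T_\lambda^{*}$ acts on functions of one variable $y$, its kernel is
\[
K(y,z)=\int_{\mathbb{R}^2} e^{i\lambda (S(x,y)-S(x,z))}\Phi(x,y)\overline{\Phi(x,z)}\,dx .
\]
Rather than bound this kernel directly, we decompose in the variables first. Take a smooth dyadic partition of unity and write $T_\lambda=\sum_{j,k}T_{j,k}$, where $T_{j,k}$ localizes to $|x|\sim 2^{-j}$ and $|y|\sim 2^{-k}$. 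We also add the pieces where $|x|\lesssim\lambda^{-1/n}$ or $|y|\lesssim \lambda^{-1/n}$. Those small pieces are handled by the trivial size bound $\|T\|\le \|\Phi\|_\infty |\mathrm{supp}_x|^{1/2}|\mathrm{supp}_y|^{1/2}$. Because $\delta\ge n/3$, this bound is at most $\lambda^{-1/(2\delta)}$ whenever the support has $x$-area times $y$-length $\lesssim\lambda^{-3/n}$.

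For a dyadic piece we rescale $x=2^{-j}u$ and $y=2^{-k}v$ with $|u|,|v|\sim1$. The phase becomes
\[
\lambda\sum_i 2^{-jn+ (j-k)n_i}\, v^{n_i}h_{n-n_i}(u).
\]
The $i$-th monomial therefore carries effective frequency $\lambda_i=\lambda 2^{-k n_i-j(n-n_i)}$. In each region of the $(j,k)$ plane, one term dominates:
\begin{itemize}
\item the $i=1$ term when $2^{-k}\ll 2^{-j}$,
\item the $i=s$ term when $2^{-j}\ll 2^{-k}$,
\item all terms comparably when $j\approx k$.
\end{itemize}
For the dominant term we use the nondegeneracy of $h_{n-n_1}$ or $h_{n-n_s}$. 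The mixed Hessian $\partial_v\nabla_u$ of $v^{m}h(u)$ is $m v^{m-1}\nabla h(u)$. Nondegeneracy means exactly that $\nabla h\ne0$ on $u\neq0$, since $h$ has no repeated linear factor. So the rescaled operator is a $(2+1)$ operator with rank-one mixed Hessian, and Hörmander's bound gives decay $\lambda_i^{-1/2}$. A Taylor/perturbation argument shows that the lower-order terms do not destroy this gradient condition once the dominant term wins by a large constant factor.

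Undoing the scaling, each piece satisfies
\[
\|T_{j,k}\|\lesssim 2^{-j}2^{-k/2}\min(1,\lambda_i^{-1/2}).
\]
We then sum over $(j,k)$. The three constraints $n/3$, $(n-n_s)/2$ and $n_1$ come from the three regimes. On the diagonal, the critical exponent is where $2^{-3j/2}$ balances $\lambda^{-1/2}2^{jn/2}$, giving $n/3$. The $y$-small corner gives $n_1$. The $x$-small corner, optimized over the diagonal $\lambda_s\approx1$, gives $(n-n_s)/2$. Summation yields $\lambda^{-1/(2\delta)}$ when the extremal geometric series strictly decays. When $\delta=1$ the sums become flat along a line, producing a factor of at most $\log\lambda$.

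The main obstacle is summing the pieces without a $\log$ loss when $\delta>1$, since crude triangle-inequality summation may only be borderline. To handle this we use Cotlar–Stein almost orthogonality for the pieces along lines in $(j,k)$. Pieces with different $k$ have disjoint $y$-supports, so $T_{j,k}^{*}T_{j',k'}=0$ when $|k-k'|\ge2$, which reduces the problem to summing over $j$ for fixed $k$. For $T_{j,k}T_{j',k}^{*}$ we estimate the $TT^{*}$ kernel by integrating by parts in $x$, exploiting the separated $x$-supports together with the nonvanishing gradient, to gain decay in $|j-j'|$.

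The second delicate point is the transition region $j\approx k$, where no single term dominates. There the mixed Hessian is $\sum_i n_i v^{n_i-1}\nabla h_{n-n_i}(u)$, which may vanish. Only the van der Corput-type estimate from the full homogeneity of degree $n$ is available in this region. We use the operator van der Corput lemma in $y$: the phase, viewed as a polynomial in $v$ of degree at most $n_s$, has some derivative bounded below. This gives a weaker but still sufficient decay, and it is what forces the exponent $n/3$.
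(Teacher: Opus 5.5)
\textbf{Verdict: there is a genuine gap.} The missing piece is the transition region $j\approx k$ near singular points in general position. There a weaker estimate is \emph{not} sufficient, and the paper does not use a van der Corput shortcut: its lemma for singular points in general position imports Tang's 2006 analysis.

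Your off-diagonal regimes essentially reproduce the paper's X-plane and Y-axis arguments. Rescaling plus H\"ormander's rank-one bound is the same as van der Corput in $(x_1,y)$ or $(x_2,y)$ with Young's inequality in the other variable. Your bookkeeping with $k\ge j+C$ and $j\ge k+C$ correctly yields $\max(n_1,n/3)$ and $\max((n-n_s)/2,n/3)$.

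\textbf{Why a weaker bound fails.} Suppose the rescaled unit-scale operator near such a direction only satisfies $\mu^{-\epsilon}$ with $\mu=\lambda2^{-jn}$. Then the diagonal pieces give at best $\max_j 2^{-3j/2}\min(1,(\lambda2^{-jn})^{-\epsilon})\approx\lambda^{-\min(\epsilon,3/(2n))}$. Moreover, the $j=0$ piece is just $T_\lambda$ localized near that direction at unit distance from the origin. So you need $\epsilon\ge 1/(2\delta)$, which means $\epsilon\ge 3/(2n)$ whenever $\delta=n/3$: the unit-scale estimate must be as strong as the theorem itself.

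The exponent $n/3$ comes from the nonsingular diagonal with the full $\mu^{-1/2}$, not from the weak bound. Also, a lower bound on a pure $v$-derivative is not an operator estimate. The natural way to make it one ($T^{*}T$ in $u$, scalar van der Corput in $v$ of order at most $n_s$, Schur's test) gives about $\mu^{-1/(2n_s)}$. That is too weak whenever $n_s>\delta$.

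\textbf{Concrete counterexample to the shortcut.} Take $S=(x_1-y)^6-x_1^6-y^6+yx_2^5$. Then $n=6$, $n_1=1$, $n_s=5$, the forms $h_5=x_2^5-6x_1^5$ and $h_1=-6x_1$ are nondegenerate, and $\delta=2$.
\begin{itemize}
\item $\nabla_x\partial_yS=(-30(x_1-y)^4,\,5x_2^4)$ vanishes on the line through $(1,0,1)$.
\item Every nonvanishing mixed derivative at that point has order $6$.
\item Each $(x_1,y)$-slice has phase $(x_1-y)^6$ modulo pure terms, so even the sharp $(1+1)$ slice bound is $\lambda^{-1/6}$. Integrated in $x_2$, this falls short of the required and sharp $\lambda^{-1/4}$.
\end{itemize}
One obtains $\lambda^{-1/4}$ by a further decomposition by distance to the singular line, $|x_1-y|\sim2^{-l}$ and $|x_2|\sim2^{-m}$. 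This combines van der Corput in $(x_1,y)$ on curved boxes, van der Corput in $(x_2,y)$, the size bound, and orthogonality along the diagonal. In general this requires controlling how $\nabla_x\partial_yS$ vanishes transversally to each singular line, and your proposal does not supply that.

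\textbf{Two smaller points.}
\begin{itemize}
\item The strips $\{|y|\lesssim\lambda^{-1/n},|x|\sim1\}$ and $\{|x|\lesssim\lambda^{-1/n},|y|\sim1\}$ cannot be discarded by the size bound, which gives only $\lambda^{-1/(2n)}$ and $\lambda^{-1/n}$. When $\delta=n_1$ the extremal pieces sit inside the first strip, at $|y|\sim\lambda^{-1/n_1}$. The dyadic decomposition must continue until $\lambda_1\lesssim1$ (respectively $\lambda_s\lesssim1$).
\item $T_{j,k}T_{j',k}^{*}$ vanishes identically once the $x$-supports are separated, so no integration by parts is needed. The log-free summation comes, as in the paper, from grouping pieces along critical lines on which both $j$ and $k$ change.
\end{itemize}
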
%

We further prove that the decay rate $1/\left(  2\delta \right)  $ is optimal,
and it is given by the modified Newton distance of the phase function, thus
verifies the conjecture of \citet{Greenleaf07} in this case.%

\begin{theorem}%
For the oscillatory integral operators considered in Theorem 1, if
$\Phi(0,0,0)\neq0$ then we have%
\begin{equation}
\left \Vert T_{\lambda}\right \Vert _{L^{2}\left(  \mathbb{R}^{2}\right)
\rightarrow L^{2}\left(  \mathbb{R}\right)  }\geq C\lambda^{-1/\left(
2\delta \right)  },
\end{equation}
where $C$ is a constant independent of $\lambda$. Thus, the decay rate
$1/\left(  2\delta \right)  $ is optimal. Further, $\delta$ is the modified
Newton distance of the phase function $S\left(  x_{1},x_{2},y\right)  $, i.e.,
the maximum Newton distance under affine transforms in $x_{1}$ and $x_{2}$.%
\end{theorem}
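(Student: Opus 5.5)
The lower bound will be proved with explicit test functions adapted to the three quantities in $\delta=\max\left(\frac n3,\frac{n-n_s}{2},n_1\right)$. Since $\Phi(0,0,0)\neq0$, pick $\epsilon>0$ so that $\operatorname{Re}\Phi\geq\frac12|\Phi(0,0,0)|$ (after a harmless constant phase rotation) on $[-\epsilon,\epsilon]^3$. For positive exponents $a,c\ge 0$ let $f=\chi_{B}$, where $B=\{|x_1|,|x_2|\le\epsilon\lambda^{-a}\}$, and let $Y=\{|y|\le\epsilon\lambda^{-c}\}$. If the exponents are chosen so that $\lambda|S(x_1,x_2,y)|\le\frac{\pi}{4}$ on $B\times Y$, then $\operatorname{Re}\bigl(e^{i\lambda S}\Phi\bigr)\geq c_0>0$ there. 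Hence $|T_\lambda f(y)|\geq c_0|B|$ for $y\in Y$, which gives
\[
\frac{\|T_\lambda f\|_{L^2}}{\|f\|_{L^2}}\geq c_0|B|^{1/2}|Y|^{1/2}\approx\lambda^{-(2a+c)/2}.
\]
Since $S$ is a finite sum of monomials $y^{n_j}x_1^{\alpha}x_2^{\beta}$ with $\alpha+\beta=n-n_j$, the condition reduces to $n_jc+(n-n_j)a\geq1$ for all $j$ (and $\epsilon$ small enough to absorb the coefficients).

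I will use three choices of exponents. First, take $a=c=\frac1n$; every monomial then has size $\lambda^{-1}$, and the ratio is $\lambda^{-3/(2n)}$. Second, take $a=0$ and $c=\frac1{n_1}$; then $\lambda y^{n_j}\lesssim\epsilon^{n_j}\lambda^{1-n_j/n_1}\le\epsilon^{n_j}$ because $n_j\ge n_1$, and the ratio is $\lambda^{-1/(2n_1)}$. Third, take $c=0$ and $a=\frac1{n-n_s}$; then $n-n_j\ge n-n_s$ for all $j$, and the ratio is $\lambda^{-1/(n-n_s)}$. The best of the three is exactly $\lambda^{-1/(2\delta)}$, which proves the lower bound. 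Combined with Theorem 1, this shows the rate $1/(2\delta)$ is optimal (up to $\log\lambda$ when $\delta=1$).

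For the identification with the modified Newton distance, I take the Newton polyhedron of $S$ in exponent space $(\alpha,\beta,\gamma)$, meaning the convex hull of the exponents plus $\mathbb{R}^3_+$. Its Newton distance is the least $t$ with $(t,t,t)$ in the polyhedron. Every exponent satisfies three linear inequalities: $\alpha+\beta+\gamma=n$, $\gamma\ge n_1$, and $\alpha+\beta\ge n-n_s$ (the last because $\gamma\le n_s$ and homogeneity). These persist on the polyhedron and are invariant under linear changes of $(x_1,x_2)$. Hence for every linear coordinate system one gets $3t\ge n$, $t\ge n_1$ and $2t\ge n-n_s$, so the distance is $\ge\delta$. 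Translations in $x$ are irrelevant at the origin, since we localize there and homogeneity only involves the linear part.

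Conversely, I will choose coordinates in which $h_{n-n_1}(1,0)$, $h_{n-n_1}(0,1)$, $h_{n-n_s}(1,0)$ and $h_{n-n_s}(0,1)$ are all nonzero; a generic rotation does this. Then the points $(n-n_1,0,n_1)$, $(0,n-n_1,n_1)$, $(n-n_s,0,n_s)$ and $(0,n-n_s,n_s)$ are all exponents of $S$. I will then check that $(\delta,\delta,\delta)$ lies in their convex hull plus the octant, by splitting into the three cases according to which term attains the maximum in $\delta$. When $\delta=n/3$, this means $(n/3,n/3,n/3)$ is a convex combination of the four points: it lies on the plane $\alpha+\beta+\gamma=n$ with $n_1\le n/3\le n_s$. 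In the other two cases one adds octant directions.

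The main obstacle is reconciling this computation with the precise definition of ``modified'' Newton distance in \citet{Greenleaf07}, i.e., the extremum over coordinate changes. In degenerate coordinates the pure powers may disappear and the naive distance can grow. I therefore expect to argue via the invariant inequalities above that, under the normalization used in \citet{Greenleaf07} (which is attained in the generic coordinates where the pure powers appear), the extremal value equals $\delta$. I will also use the two-sided bound of Theorems 1 and 2 to confirm that no coordinate system can give a value inconsistent with the sharp rate $\lambda^{-1/(2\delta)}$.
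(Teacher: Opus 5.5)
Your lower-bound argument is correct and is essentially the paper's. The paper also uses three box test functions: scale $\lambda^{-1/n}$ in all variables, $\lambda^{-1/n_1}$ in $y$ alone, and $\lambda^{-1/(n-n_s)}$ in $x$ alone. Your unified $(a,c)$ bookkeeping is cleaner. All your boxes also sit at the origin, where $\Phi(0,0,0)\neq0$ applies directly; the paper's boxes near $(a,b,0)$ and at $y\approx1$ implicitly need a rescaling by homogeneity. One small fix: choose $\epsilon$ so that $|\Phi-\Phi(0,0,0)|\le\frac{1}{10}|\Phi(0,0,0)|$, not merely $\operatorname{Re}\Phi\ge\frac12|\Phi(0,0,0)|$. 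Then $|\lambda S|\le\pi/4$ really does give $\operatorname{Re}(e^{i\lambda S}\Phi)\ge c_0$.

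The gap is in the identification with the modified Newton distance, which the statement defines as the \emph{maximum} over changes of the $x$-coordinates.

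Your invariant inequalities show that the Newton distance $d_L$ satisfies $d_L\ge\delta$ in every linear coordinate system $L$. Your generic coordinates give $d_L=\delta$. Together these only show that $\delta$ is the \emph{minimum} of $d_L$. The missing step is $d_L\le\delta$ for every $L$, and you even suggest it may fail (``the naive distance can grow'').

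It does not fail. An invertible linear change of $x$ preserves the structure $\sum_j y^{n_j}h_{n-n_j}(x)$ and the non-degeneracy of $h_{n-n_1}$ and $h_{n-n_s}$. A non-degenerate binary form of degree $m\ge2$ always has a nonzero coefficient on $x_1^m$ or $x_1^{m-1}x_2$, and on $x_2^m$ or $x_1x_2^{m-1}$; otherwise $x_2^2$ or $x_1^2$ would divide it. Hence its Newton polygon contains $(m-1,1)$ and $(1,m-1)$, and so $(m/2,m/2)$. Therefore $\bigl(\frac{n-n_1}{2},\frac{n-n_1}{2},n_1\bigr)$ and $\bigl(\frac{n-n_s}{2},\frac{n-n_s}{2},n_s\bigr)$ lie in the Newton polyhedron in \emph{every} linear coordinate system. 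Your three-case check then runs unchanged, with no need for pure powers; this is the paper's argument.

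The only exceptions are degree-one forms:
\begin{itemize}
\item If $n_1=n-1$, then $s=1$ and $(1,0,n-1)$ plus the octant suffices.
\item If $n_s=n-1$ and $\delta=n/3$, you must write $(n/3,n/3,n/3)$ as a convex combination of $(1,n-n_1-1,n_1)$, $(n-n_1-1,1,n_1)$ and whichever of $(1,0,n-1)$, $(0,1,n-1)$ occurs, as the paper does.
\end{itemize}

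Alternatively, your closing appeal to Theorems 1 and 2 can be made rigorous, but only via a lemma you did not state. Take a supporting plane $\kappa_1\alpha+\kappa_2\beta+\kappa_3\gamma=1$ ($\kappa_i\ge0$) of the Newton polyhedron at $(d_L,d_L,d_L)$. The anisotropic test box $|x_1'|\le\epsilon\lambda^{-\kappa_1}$, $|x_2'|\le\epsilon\lambda^{-\kappa_2}$, $|y|\le\epsilon\lambda^{-\kappa_3}$ in the new coordinates gives $\|T_\lambda\|\gtrsim\lambda^{-1/(2d_L)}$. Theorem 1, including its $\log\lambda$ case when $\delta=1$, then forces $d_L\le\delta$.
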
%

Note that if $n_{1}\leq \frac{n}{3}$ and $n_{s}\geq \frac{n}{3}$, then
$\delta=\frac{n}{3}.$ Hence, our theorems include the result of \citet{tang06}
as a special case.

Since we only consider the $L^{2}$ norm, we use $\| \cdot \|$ to denote the
$L^{2}$ operator norm throughout the remainder of the paper. In addition, we
adopt the notation $\lesssim$ and $\gtrsim$ to simplify expressions: $a
\lesssim b$ (respectively, $a \gtrsim b$) means that $|a| \le C |b|$
(respectively, $|a| \ge C |b|$), where $C$ is a constant independent of
$\lambda$.

\section{Proof of the decay rate}

Similar to \citet{tang06}, we establish the decay rate by decomposing the
operator according to the size of the Hessian, following the general strategy
of \citet{Phong94}. The key idea in \citet{Phong94} is to partition the
integral region of the operator~(\ref{ggg}) based on the magnitude of the
mixed Hessian $S_{x,y}^{\prime \prime}$. In general, regions closer to the
singularity correspond to smaller scales. On each component of this
decomposition, the Van der Corput lemma can be applied, together with Young's
inequality, yielding the overall decay rate for (\ref{ggg}).

Since the amplitude $\Phi(x_{1},x_{2},y)$ has compact support, we may assume
without loss of generality that $\mathrm{supp}(\Phi)$ is contained in the unit
ball $|(x_{1},x_{2},y)|\leq1$. Rather than directly partitioning the entire
support, we first perform a conic decomposition to exploit the homogeneity of
the phase function. In particular, it suffices to prove the desired decay rate
within sufficiently small conic neighborhoods around points on the unit sphere
in $\mathbb{R}_{x,y}^{3}$. More precisely, given any open cover of the unit
sphere in $\mathbb{R}_{x,y}^{3}$, we may extract a finite subcover $\{
\sigma_{k}\}_{k=1}^{m}$ together with a corresponding partition of unity
$\sum_{k=0}^{m}\varphi_{k}(x_{1},x_{2},y)\equiv1$, where each $\varphi_{k}$ is
supported in $\sigma_{k}$. Then the operator $T_{\lambda}$ can be decomposed
as
\[
T_{\lambda}=\sum_{k=0}^{m}T_{\lambda}^{k},
\]
where $T_{\lambda}^{k}$ denotes the operator with the additional cutoff
$\varphi_{k}\! \left(  \frac{(x_{1},x_{2},y)}{|(x_{1},x_{2},y)|}\right)  $
inserted into the amplitude, namely
\begin{equation}
T_{\lambda}^{k}f(y)=\int e^{i\lambda S(x_{1},x_{2},y)}\Phi(x_{1}%
,x_{2},y)\varphi_{k}\left(  \frac{1}{|(x_{1},x_{2},y)|}(x_{1},x_{2},y)\right)
f(x_{1},x_{2})\,dx_{1}dx_{2}. \label{oioc}%
\end{equation}

Within each conic neighborhood, we further partition according to the distance
to the origin. The insertion of $\varphi_{k}\! \left(  \frac{(x_{1},x_{2}%
,y)}{|(x_{1},x_{2},y)|}\right)  $ does not affect the applicability of the Van
der Corput lemma, as it satisfies condition~(\ref{zz}) below. For notational
simplicity, we will continue to denote each such localized operator by
$T_{\lambda}$.

Fix a point $(a,b,c)$ on the unit sphere. Since at least one of $a,b,c$ is
nonzero, we may assume without loss of generality that $c\neq0$. For any
$\varepsilon>0$, define
\begin{equation}
R_{k}=\Big \{ \left(  x_{1},\;x_{2}\right)  \in \frac{1}{c}\left(  a,b\right)
y+2^{-k-1}\left(  -\varepsilon,\varepsilon \right)  \times \left(
-\varepsilon,\varepsilon \right)  ,\;y\in c(2^{-k-1},2^{-k+1})\Big \},
\label{r}%
\end{equation}
for $k=1,2,\dots$. Then $\{R_{k}\}_{k=1}^{\infty}$ forms a conic neighborhood
of $(a,b,c)$ within $\mathrm{supp}(\Phi)$. Let $\{ \varphi_{k}\}_{k=0}%
^{\infty}$ with $0\leq \varphi_{k}\leq1$ and $\sum_{k=0}^{\infty}\varphi
_{k}\equiv1$ be a corresponding partition of unity such that
\begin{equation}
\operatorname{supp}(\varphi_{k})\subset R_{k},\  \Vert \partial_{x}^{\alpha
}\partial_{y}^{\beta}\varphi_{k}\Vert_{\infty}\leq C_{\alpha \beta}%
2^{(|\alpha|+|\beta|)k}. \label{zz}%
\end{equation}

With this partition, we decompose the operator as
\[
T_{\lambda}=\sum_{k=0}^{\infty}T_{\lambda}^{k},
\]
where
\[
T_{\lambda}^{k}f(y)=\int e^{i\lambda S(x_{1},x_{2},y)}\Phi(x_{1},x_{2},y)\,
\varphi_{k}(x_{1},x_{2},y)\,f(x_{1},x_{2})\,dx_{1}dx_{2}.
\]
Condition (\ref{zz}) ensures that after inserting $\varphi_{k},$ the new
amplitude function $\Phi(x_{1},x_{2},y)\, \varphi_{k}(x_{1},x_{2},y)$ still
satisfies the condition for the amplitude function in the Van der Corput lemma
(condition (1) in Lemma 3 of \citet{tang06}). Further refinements of the
decomposition may be required depending on the nature of the singularity at
the point under consideration. In all cases, however, the decomposition is
carried out via a partition of the integration region, with each component
corresponding to an integral operator obtained by inserting a partition of
unity subordinate to this decomposition. The partition functions are chosen to
satisfy conditions analogous to (\ref{zz}), ensuring that the Van der Corput
lemma remains applicable. For brevity, in the remainder of the paper we
describe only the geometric decomposition of the region, with the
understanding that the corresponding operator decomposition follows from the
associated partition of unity.

\subsection{Nonsingular points}

At a non-singular point $\left(  a,b,c\right)  $, we have $\frac{\partial
^{2}S}{\partial x_{1}\partial y}\left(  a,b,c\right)  \neq0$ or $\frac
{\partial^{2}S}{\partial x_{2}\partial y}\left(  a,b,c\right)  \neq0.$ The
decay rate $\lambda^{-3/\left(  2n\right)  }$ $\left(  \lambda^{-1/2}%
\log \left(  \lambda \right)  \text{ if }n=3\right)  $ can be obtained following
the same approach as described in \citet{tang06}, by dyadically decomposing
the operator according to the distance to the origin using (\ref{r}).%

\begin{lemma}%
In a small conic neighborhood of a non-singular point $\left(  a,b,c\right)
$, we have $\left \Vert T_{\lambda}\right \Vert \lesssim \lambda^{-3/\left(
2n\right)  }$ or $\left(  \lambda^{-1/2}\log \left(  \lambda \right)  \right)  $
if $n=3$.%
\end{lemma}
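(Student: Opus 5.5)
We localize to a small conic neighborhood of the non-singular point $(a,b,c)$ and decompose dyadically by distance to the origin using the regions $R_k$ in (\ref{r}), writing $T_{\lambda}=\sum_k T_{\lambda}^k$. Since $S$ is homogeneous of degree $n$, the mixed second derivatives $\partial^2 S/\partial x_j\partial y$ are homogeneous of degree $n-2$. Non-singularity means that one of them, say $\partial^2 S/\partial x_1\partial y$, is nonzero at $(a,b,c)$. Shrinking the neighborhood, we may therefore assume
\[
\left|\frac{\partial^{2}S}{\partial x_{1}\partial y}(x_{1},x_{2},y)\right|\approx 2^{-k(n-2)}
\]
throughout the support of $\varphi_k$, i.e. on $R_k$, where $|(x_1,x_2,y)|\approx 2^{-k}$.

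For each piece we use two estimates. The first is trivial: by Schur's test (or Cauchy--Schwarz), $\|T_{\lambda}^k\|$ is bounded by the square root of the product of the $x$-measure and the $y$-measure of the support. The $x$-support of $R_k$ has area about $2^{-2k}$ and the $y$-support has length about $2^{-k}$, so
\[
\|T_{\lambda}^k\|\lesssim (2^{-2k}\cdot 2^{-k})^{1/2}=2^{-3k/2}.
\]

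The second estimate is oscillatory. We freeze $x_2$ and regard $T_{\lambda}^k$ as a family of $(1+1)$-dimensional operators in $(x_1,y)$. Applying the Van der Corput / operator lemma (Lemma 3 of \citet{tang06}, whose amplitude condition is guaranteed by (\ref{zz})) gives
\[
\|T_{\lambda,x_2}^k\|_{L^2(dx_1)\to L^2(dy)}\lesssim (\lambda 2^{-k(n-2)})^{-1/2}.
\]
We then integrate over $x_2$, which ranges over an interval of length about $2^{-k}$. By Minkowski's inequality followed by Cauchy--Schwarz in $x_2$,
\[
\|T_{\lambda}^k f\|\le \int \|T^k_{\lambda,x_2}\|\,\|f(\cdot,x_2)\|\,dx_2\lesssim (\lambda 2^{-k(n-2)})^{-1/2}2^{-k/2}\|f\|,
\]
so that
\[
\|T_{\lambda}^k\|\lesssim \lambda^{-1/2}2^{k(n-3)/2}.
\]

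This Minkowski step is the point needing care: we need a uniform lower bound on $|S''_{x_1y}|$ over the whole slice. This is exactly why the cone is taken small around a point where $S''_{x_1y}\ne0$, and homogeneity transports that bound to every scale $2^{-k}$. We also need the derivative bounds (\ref{zz}) on $\Phi\varphi_k$ to match the scale $2^{-k}$ of the region, so that the lemma applies with constants independent of $k$.

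Finally we sum over $k$, taking the minimum of the two bounds for each piece:
\[
\|T_{\lambda}\|\le\sum_k \min\left(2^{-3k/2},\lambda^{-1/2}2^{k(n-3)/2}\right).
\]
The two bounds balance at $2^{kn/2}=\lambda^{1/2}$, i.e. at $2^{k}=\lambda^{1/n}$. For $n>3$ both sums are geometric and are dominated by the balancing term, giving $\lambda^{-3/(2n)}$. For $n=3$ the second bound is $\lambda^{-1/2}$ for every $k$; we use it for $2^k\le\lambda^{1/3}$ and the first bound beyond. This gives $\lambda^{-1/2}\cdot\frac13\log_2\lambda+O(\lambda^{-1/2})\lesssim\lambda^{-1/2}\log\lambda$. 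For $n=2$ the mixed Hessian is constant and nonzero, so Hörmander's bound gives $\lambda^{-1/2}$ directly. The conic cutoff piece $k=0$ (away from the origin) is handled by the same oscillatory bound with $k=0$.
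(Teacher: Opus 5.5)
Your proposal is correct and follows the paper's proof essentially step for step. It uses the same dyadic decomposition into shells $|(x,y)|\sim 2^{-k}$, the same size bound $2^{-3k/2}$, the same oscillatory bound $\lambda^{-1/2}2^{(n-3)k/2}$ obtained from the operator van der Corput lemma in $(x_1,y)$ plus Minkowski and Cauchy--Schwarz in $x_2$ (the paper's ``Young's inequality in $x_2$''), and the same summation of the minima.

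One small caveat concerns your aside on $n=2$: it gives $\lambda^{-1/2}$, not the stated $\lambda^{-3/(2n)}=\lambda^{-3/4}$. The stated bound is in fact false for $n=2$: the sum $\sum_k\min(2^{-3k/2},\lambda^{-1/2}2^{-k/2})\approx\lambda^{-1/2}$, and the norm is genuinely of size $\lambda^{-1/2}$ for $S=y(\alpha x_1+\beta x_2)$. So the lemma, in the paper as well as in your write-up, tacitly requires $n\ge 3$.
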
%
%

\begin{proof}%
We dyadically decompose the operator according to the distance to the origin
using (\ref{r}). For each of $T_{k}^{\lambda}$, we may apply the Van der
Corput lemma and Young's inequality to obtain two estimates according to the
size of the support of the operator as well as the magnitude of the Hessian.
More precisely, if $\frac{\partial^{2}S}{\partial x_{1}\partial y}\left(
x,y\right)  \neq0$, then $\frac{\partial^{2}S}{\partial x_{1}\partial
y}\left(  x,y\right)  \sim2^{-\left(  n-2\right)  k}$, by applying the Van der
Corput lemma (the oscillatory estimate Lemma 3 in \citet{tang06}) in the
($x_{1},y$) variables, and Young's inequality in $x_{2}$, we obtain
\begin{equation}
\left \Vert T_{k}^{\lambda}\right \Vert \lesssim \lambda^{-1/2}2^{(n-3)k/2}.
\label{non}%
\end{equation}
If $\frac{\partial^{2}S}{\partial x_{2}\partial y}\left(  x,y\right)  \neq0$,
then $\frac{\partial^{2}S}{\partial x_{2}\partial y}\left(  x,y\right)
\sim2^{-\left(  n-2\right)  k}$. Applying the Van der Corput lemma in the
($x_{2},y$) variables, and Young's inequality in $x_{1}$, we obtain the same
oscillatory estimate as (\ref{non}).

On the other hand, applying Young's inequality (the size estimate Lemma 4 in
\citet{tang06}) in all variables yields,
\[
\left \Vert T_{k}^{\lambda}\right \Vert \lesssim2^{-3k/2}.
\]

It follows that
\begin{align*}
\left \Vert T_{\lambda}\right \Vert  &  \lesssim \sum \nolimits_{k}\left \Vert
T_{k}^{\lambda}\right \Vert \lesssim \sum \nolimits_{k}\min \left(  \lambda
^{-1/2}2^{(n-3)k/2},2^{-3k/2}\right) \\
&  \lesssim \lambda^{-3/\left(  2n\right)  }\  \text{(or }\lambda^{-1/2}%
\log \left(  \lambda \right)  \text{ if }n=3\text{).}%
\end{align*}%
\end{proof}%

Next, we turn to the analysis of singular points. Let $(a,b,c)$ be a singular
point on the unit sphere. Then
\[
\frac{\partial^{2}S}{\partial x_{1}\partial y}(a,b,c)=\frac{\partial^{2}%
S}{\partial x_{2}\partial y}(a,b,c)=0.
\]
We classify the singular points into the following three cases:

\begin{enumerate}
\item Singular points on the $X$-plane: $c=0$;

\item Singular points on the $Y$-axis: $(a,b)=(0,0)$;

\item Singular points in general position: $c\neq0$ and $(a,b)\neq(0,0)$.
\end{enumerate}

\subsection{Singular points on the X-plane}

For any point $\left(  a,b,0\right)  $ with $\sqrt{a^{2}+b^{2}}=1,$ because of
the non-degeneracy of $h_{n-n_{1}}\left(  x_{1},x_{2}\right)  $, we have
$\left(  \frac{\partial}{\partial x_{1}}h_{n-n_{1}}\left(  a,b\right)
,\frac{\partial}{\partial x_{2}}h_{n-n_{1}}\left(  a,b\right)  \right)
\neq \left(  0,0\right)  $. We have the following lemma.%

\begin{lemma}%
In a small conic neighborhood of the point $\left(  a,b,0\right)  $, we have
$\left \Vert T_{\lambda}\right \Vert \lesssim \lambda^{-1/\left(  2\max \left(
n_{1},\frac{n-n_{1}}{2}\right)  \right)  }$ or $\lambda^{-1/2}\log \left(
\lambda \right)  $ if $\max \left(  n_{1},\frac{n-n_{1}}{2}\right)  =1$.%
\end{lemma}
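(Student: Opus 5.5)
The plan is to use the same scheme as in the proof of Lemma 3 (the nonsingular case), but with a two-parameter dyadic decomposition adapted to the fact that near $(a,b,0)$ the variable $y$ is small compared with $|x|=|(x_1,x_2)|$. In a small conic neighborhood of $(a,b,0)$ we have $|y|\le\varepsilon|x|$. Hence every term $y^{n_i}h_{n-n_i}(x)$ with $i\ge2$ is smaller than the leading term $y^{n_1}h_{n-n_1}(x)$ by a factor $(|y|/|x|)^{n_i-n_1}\le\varepsilon^{n_i-n_1}$, and the same holds for their mixed derivatives.

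Since $\nabla h_{n-n_1}(a,b)\neq(0,0)$, we can rotate in $(x_1,x_2)$, which preserves $L^2$ norms, and assume $\partial_{x_1}h_{n-n_1}\neq0$ near $(a,b)$. By homogeneity, $|\partial_{x_1}h_{n-n_1}(x)|\sim|x|^{n-n_1-1}$ on the cone. Therefore, for $\varepsilon$ small,
\[
\Big|\frac{\partial^2 S}{\partial x_1\partial y}(x_1,x_2,y)\Big|\sim |y|^{n_1-1}|x|^{n-n_1-1}.
\]

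Next we decompose, via a partition of unity satisfying the analogue of (\ref{zz}), into pieces $T_{j,k}$ with $|x|\sim2^{-j}$ and $|y|\sim2^{-k}$, where $k\ge j-C$. The cutoffs have derivative bounds $2^{j}$ in $x$ and $2^{k}$ in $y$, which is what the oscillatory estimate (Lemma 3 of \citet{tang06}) requires on such rectangles. On each piece we get two bounds.

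\emph{Size estimate.} Young's inequality in all variables, applied to a kernel supported on an $x$-set of area $\sim2^{-2j}$ and a $y$-interval of length $\sim2^{-k}$, gives
\[
\|T_{j,k}\|\lesssim2^{-j-k/2}.
\]

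\emph{Oscillatory estimate.} The van der Corput type lemma in $(x_1,y)$, combined with Young's inequality in $x_2$ over a set of width $2^{-j}$, gives
\[
\|T_{j,k}\|\lesssim\lambda^{-1/2}2^{\frac{(n_1-1)k+(n-n_1-1)j}{2}}2^{-j/2}.
\]

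It then remains to bound $\sum_{k\ge j\ge0}\min\bigl(\lambda^{-1/2}2^{((n_1-1)k+(n-n_1-2)j)/2},\,2^{-j-k/2}\bigr)$. Both exponents are linear in $(j,k)$, so the sum is controlled by the two extreme rays of the cone $\{k\ge j\ge 0\}$, with an extra logarithm only when the controlling ray gives exponent exactly $\lambda^{-1/2}$ (the case $\max(n_1,\frac{n-n_1}{2})=1$).
\begin{itemize}
\item On the ray $j=0$ (that is, $|x|\sim1$, $y\to0$) the minimum is $\min(\lambda^{-1/2}2^{(n_1-1)k/2},2^{-k/2})$, whose sum is $\lambda^{-1/(2n_1)}$.
\item On the diagonal $j=k$ the minimum is $\min(\lambda^{-1/2}2^{(n-3)k/2},2^{-3k/2})$, whose sum is $\lambda^{-3/(2n)}$.
\end{itemize}
Concretely, for fixed $j$ we sum in $k$ first: the terms are geometric on each side of the crossover point. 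The result is a sequence in $j$ that is geometric away from the balancing point, which we then sum. Finally, $\frac n3\le\max(n_1,\frac{n-n_1}{2})$: if $n_1<n/3$ then $(n-n_1)/2>n/3$. Hence both contributions are $\lesssim\lambda^{-1/(2\max(n_1,\frac{n-n_1}{2}))}$, which is the claimed bound.

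I expect the main obstacle to be justifying the oscillatory estimate uniformly on the anisotropic pieces. This requires a lower bound on the mixed Hessian that is uniform after the lower-order terms are absorbed; it also requires checking that the higher $x_1$-derivatives of the phase and of the cutoffs scale compatibly with the rectangle $2^{-j}\times2^{-k}$, so that Lemma 3 of \citet{tang06} applies with constants independent of $j,k$. If the direct $(j,k)$ sum turns out not to produce exactly the exponent $\max(n_1,\frac{n-n_1}{2})$, I would fall back on decomposing only in $y$, keeping $|x|\sim$ the cone scale, and redo the one-parameter sum as in the proof of Lemma 3.
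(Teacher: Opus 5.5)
Your decomposition and per-piece estimates match the paper's: pieces with $|x|\sim 2^{-j}$ and $|y|\sim 2^{-k}$, the size bound $2^{-j-k/2}$, and the oscillatory bound $\lambda^{-1/2}2^{((n_1-1)k+(n-n_1-2)j)/2}$. The summation step, however, has a genuine gap. You bound $\|T_\lambda\|$ by $\sum_{k\ge j\ge 0}\|T_{j,k}\|$ and assert that a logarithm appears only when the controlling ray gives exactly $\lambda^{-1/2}$. That is false in the borderline case $n=3n_1$. There $n-n_1-2=2(n_1-1)$, so both bounds depend only on $d=2j+k$: each piece is bounded by $\min\bigl(\lambda^{-1/2}2^{(n_1-1)d/2},\,2^{-d/2}\bigr)$. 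Hence each of the $\sim d/3$ pieces on the anti-diagonal $2j+k=d$ contributes the same amount. Equivalently, in your ``fix $j$, sum in $k$'' step, the inner sum is comparable to $\lambda^{-1/(2n_1)}2^{j(n-3n_1)/(2n_1)}$ for $0\le j\le \log_2\lambda/n$. When $n=3n_1$ this is constant in $j$, not geometric, so the triangle inequality only gives $\lambda^{-1/(2n_1)}\log\lambda$. This is a genuine singular case with $\delta>1$, for example $n=6$, $n_1=2$, $n_s=3$, where the lemma claims $\lambda^{-1/4}$ with no logarithm. Similarly, for $n=3$, $n_1=1$ the oscillatory exponent vanishes identically, and your double sum gives $\lambda^{-1/2}(\log\lambda)^2$ rather than $\lambda^{-1/2}\log\lambda$.

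The missing idea, which is the key step in the paper, is almost orthogonality along anti-diagonals. Set $T_d=\sum_{2j+k=d}T_{j,k}$. Two distinct pieces with the same $d$ have different $j$ \emph{and} different $k$, so their $x$-supports and their $y$-supports are essentially disjoint. Therefore $T_{j,k}^*T_{j',k'}$ and $T_{j,k}T_{j',k'}^*$ vanish except for boundedly many neighbors, and $\|T_d\|\lesssim\max_{2j+k=d}\|T_{j,k}\|$. This removes the factor $\sim d$, and the remaining one-parameter sum over $d$ is geometric on both sides of the crossover whenever the relevant exponent exceeds $1$. Your fallback of decomposing in $y$ alone does not supply this. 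Disjointness in $y$ only gives $\|T_\lambda\|\le\bigl(\sum_k\|T_k\|^2\bigr)^{1/2}$, which still loses $(\log\lambda)^{1/2}$ when $n=3n_1$. Once you add the grouping, your retention of the constraint $k\ge j$ actually pays off. The maximum of $(n_1-1)k+(n-n_1-2)j$ on $\{2j+k=d,\ k\ge j\ge 0\}$ is $d\max\bigl(n_1-1,\frac{n-3}{3}\bigr)$. This gives $\lambda^{-1/(2\max(n_1,n/3))}$, which implies the stated bound. The paper drops $k\ge j$ at this point and so obtains only the weaker exponent $\max\bigl(n_1,\frac{n-n_1}{2}\bigr)$.
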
%
%

\begin{proof}%
Without loss of generality we assume $a\neq0$, then we decompose the operator
dyadically according to $x_{1}\sim2^{-i}$ and $y\sim2^{-k}$. Thus,
\[
T_{\lambda}=\sum \nolimits_{ik}T_{ik},
\]
where $T_{ik}$ is supported over region $x_{1}\sim a2^{-i},x_{2}\sim
2^{-i}\left(  b-\varepsilon,b+\varepsilon \right)  ,y\sim2^{-k}.$ If
$\frac{\partial}{\partial x_{1}}h_{n-n_{1}}\left(  a,b\right)  \neq0$, then
$\frac{\partial}{\partial x_{1}}h_{n-n_{1}}\left(  x_{1},x_{2}\right)
\sim2^{-i\left(  n-n_{1}\right)  }.$ For any constant $C$, there is a small
enough conic neighborhood of the point such that $k\geq i+C$. Hence
\[
\frac{\partial^{2}S}{\partial x_{1}\partial y}\left(  x,y\right)
\sim2^{-k\left(  n_{1}-1\right)  -i\left(  n-n_{1}-1\right)  }%
\]
over the support of $T_{ik}$. Therefore, using the operator Van der Corput
lemma in the $(x_{1},y)$ variables and Young's inequality in $x_{2}$, we
obtain
\begin{equation}
\left \Vert T_{ik}\right \Vert \lesssim \left(  \lambda2^{-k\left(
n_{1}-1\right)  -i\left(  n-n_{1}-1\right)  }\right)  ^{-1/2}2^{-i/2}%
=\lambda^{-1/2}2^{k\left(  n_{1}-1\right)  /2+i\left(  n-n_{1}-2\right)  /2}
\label{a}%
\end{equation}
If $\frac{\partial}{\partial x_{2}}h_{n-n_{1}}\left(  a,b\right)  \neq0$,
then
\begin{align*}
\frac{\partial^{2}S}{\partial x_{2}\partial y}\left(  x,y\right)   &
=n_{1}y^{n_{1}-1}\left[  \frac{\partial}{\partial x_{2}}h_{n-n_{1}}\left(
a,b\right)  +\cdots \right] \\
&  \sim2^{-k\left(  n_{1}-1\right)  -i\left(  n-n_{1}-1\right)  }.
\end{align*}
Thus we obtain the same oscillatory estimate as (\ref{a}).

On the other hand, Young's inequality in all variables yields the following
estimate based on the size of the support:
\begin{equation}
\left \Vert T_{ik}\right \Vert \lesssim2^{-(2i+k)/2}. \label{b}%
\end{equation}
Summing over $2i+k=d,$ we obtain%
\begin{equation}
\left \Vert T_{d}\right \Vert =\left \Vert \sum \nolimits_{2i+k=d}T_{ik}%
\right \Vert \lesssim2^{-d/2}\nonumber
\end{equation}
based on (\ref{b}) and
\begin{align}
\left \Vert T_{d}\right \Vert  &  =\left \Vert \sum \nolimits_{2i+k=d}%
T_{ik}\right \Vert \lesssim \lambda^{-1/2}2^{k\left(  n_{1}-1\right)
/2+i\left(  n-n_{1}-2\right)  /2}\label{d}\\
&  \lesssim \lambda^{-1/2}2^{\left(  \left(  2i+k\right)  \max \left(
n_{1}-1,\frac{n-n_{1}-2}{2}\right)  \right)  /2}\lesssim \lambda^{-1/2}%
2^{d\left(  \max \left(  n_{1},\frac{n-n_{1}}{2}\right)  -1\right)
/2}\nonumber
\end{align}
based on (\ref{a}). Note that different components with the same $2i+k=d$ are
almost orthogonal as both $i$ and $k$ are different, thus the support in $x$
and $y$ are almost disjoint.

It follows that the norm of $T_{\lambda}=\sum \nolimits_{d}T_{d},$%
\begin{align*}
\left \Vert T_{\lambda}\right \Vert  &  \leq \sum \nolimits_{d}\left \Vert
T_{d}\right \Vert \lesssim \sum \nolimits_{d}\min \left(  \lambda^{-1/2}%
2^{d\left(  \max \left(  n_{1},\frac{n-n_{1}}{2}\right)  -1\right)
/2},2^{-d/2}\right) \\
&  \lesssim \lambda^{-1/\left(  2\max \left(  n_{1},\frac{n-n_{1}}{2}\right)
\right)  }\text{ (}\lambda^{-1/2}\log \left(  \lambda \right)  \text{ if }%
\max \left(  n_{1},\frac{n-n_{1}}{2}\right)  =1\text{).}%
\end{align*}
Note that $\max \left(  n_{1},\frac{n-n_{1}}{2}\right)  =1$ happens only when
$n=3$ and $n_{1}=1.$%
\end{proof}%

\subsection{Singular points on the Y-axis}

We consider the singular point $\left(  0,0,1\right)  ;$ proof for the other
singular point on the unit sphere, $\left(  0,0,-1\right)  ,$ can be carried
out similarly. We prove the following lemma.%

\begin{lemma}%
In a small conic neighborhood of the point $\left(  0,0,1\right)  $, we have
$\left \Vert T_{\lambda}\right \Vert \lesssim \lambda^{-1/\left(  2\max \left(
\frac{n}{3},\frac{n-n_{s}}{2}\right)  \right)  }$ or $\lambda^{-1/2}%
\log \left(  \lambda \right)  $ if $\max \left(  \frac{n}{3},\frac{n-n_{s}}%
{2}\right)  =1$.%
\end{lemma}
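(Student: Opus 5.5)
The plan is to repeat the argument of the preceding lemma, now with a two-parameter dyadic decomposition adapted to the $Y$-axis. In a small conic neighborhood of $(0,0,1)$ we have $y>0$ and $|(x_{1},x_{2})|\leq\varepsilon y$. We decompose dyadically according to $y\sim2^{-k}$ and $|(x_{1},x_{2})|\sim2^{-j}$. Taking the neighborhood small enough forces $j\geq k+C$ for any prescribed constant $C$.

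\textbf{Step 1: angular decomposition in $x$.} We also decompose in the angular variable of $(x_{1},x_{2})$ into finitely many small sectors. Since $h_{n-n_{s}}$ is non-degenerate, its gradient vanishes only at the origin. By homogeneity, in each sector one of the two partial derivatives satisfies, say,
\[
\left\vert \partial_{x_{1}}h_{n-n_{s}}(x_{1},x_{2})\right\vert \sim|x|^{n-n_{s}-1}\sim2^{-j(n-n_{s}-1)} .
\]
(When $n-n_{s}=1$ this partial derivative is a nonzero constant.)

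\textbf{Step 2: size of the mixed Hessian.} We write
\[
\frac{\partial^{2}S}{\partial x_{1}\partial y}=\sum_{l=1}^{s}n_{l}y^{n_{l}-1}\partial_{x_{1}}h_{n-n_{l}}(x).
\]
The $l=s$ term has size $2^{-k(n_{s}-1)-j(n-n_{s}-1)}$. Each $l<s$ term is $O\big(2^{-k(n_{l}-1)-j(n-n_{l}-1)}\big)$, which is smaller than the main term by a factor $2^{-(j-k)(n_{s}-n_{l})}\leq2^{-C}$. Hence on the support of $T_{jk}$,
\[
\frac{\partial^{2}S}{\partial x_{1}\partial y}\sim2^{-k(n_{s}-1)-j(n-n_{s}-1)} .
\]
I expect this dominance check to be the main point needing care, since it is exactly where the ordering $n_{s}>n_{l}$ and the constraint $j\geq k+C$ are used.

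\textbf{Step 3: the two estimates for each piece.} We apply the operator Van der Corput lemma in $(x_{1},y)$ and Young's inequality in $x_{2}$ (width $2^{-j}$). This gives
\[
\Vert T_{jk}\Vert\lesssim\lambda^{-1/2}2^{k(n_{s}-1)/2+j(n-n_{s}-2)/2}.
\]
The size estimate from Young's inequality in all variables gives $\Vert T_{jk}\Vert\lesssim2^{-(2j+k)/2}$.

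\textbf{Step 4: grouping by $d=2j+k$.} Pieces with the same $d$ have essentially disjoint supports in both $x$ and $y$, so they are almost orthogonal. For $T_{d}=\sum_{2j+k=d}T_{jk}$ this gives $\Vert T_{d}\Vert\lesssim2^{-d/2}$ together with the oscillatory bound. To bound the oscillatory exponent by $d(\delta'-1)/2$, where $\delta'=\max\left(\frac{n}{3},\frac{n-n_{s}}{2}\right)$, we need
\[
k(n_{s}-1)+j(n-n_{s}-2)\leq(2j+k)(\delta'-1)\quad\text{on the cone }j\geq k\geq0 .
\]
Both sides are linear, so it suffices to check the two extreme rays. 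On $j=k$ the condition reads $n-3\leq3(\delta'-1)$, i.e. $\delta'\geq n/3$. On $k=0$ it reads $n-n_{s}-2\leq2(\delta'-1)$, i.e. $\delta'\geq(n-n_{s})/2$. Both hold by the choice of $\delta'$.

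\textbf{Step 5: summing over $d$.} Finally,
\[
\Vert T_{\lambda}\Vert\lesssim\sum_{d}\min\big(\lambda^{-1/2}2^{d(\delta'-1)/2},2^{-d/2}\big).
\]
The sum is geometric on either side of the crossover $2^{d}\sim\lambda^{1/\delta'}$, giving $\lambda^{-1/(2\delta')}$. When $\delta'=1$ the first term no longer decays in $d$, so it contributes about $\log\lambda$ terms, giving $\lambda^{-1/2}\log\lambda$.
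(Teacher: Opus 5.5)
Your proof is correct and follows the paper's route. The ingredients match: a dyadic decomposition in $y\sim2^{-k}$ and in $x$ (you use $|x|\sim2^{-j}$ with angular sectors, the paper uses $x_1\sim a2^{-i}$ on a conic sector), with $j\ge k+C$; dominance of the $y^{n_s}h_{n-n_s}$ term in the mixed Hessian, via non-degeneracy of $h_{n-n_s}$; and the operator Van der Corput lemma with Young's inequality against the trivial size bound, followed by almost-orthogonal grouping along $2j+k=d$ and summation in $d$. Your extreme-ray check of $k(n_s-1)+j(n-n_s-2)\le(2j+k)(\delta'-1)$ on $j\ge k\ge0$ supplies the justification that the paper's corresponding display for $\Vert T_d\Vert$ lacks (it is written with $n_1$ in place of $n_s$). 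The final summation uses $\delta'\ge1$, which holds here: singularity of $(0,0,1)$ forces $n_s\le n-2$, hence $n\ge3$.
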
%
%

\begin{proof}%
For a small enough conic neighborhood of the points on the Y-axis $\left(
0,0,1\right)  $, we further decompose the X-plane conically. So, it is
sufficient if we can show that for each point $\left(  a,b\right)  $ on the
unit circle of the X-plane, an operator supported in a small enough convex
conic neighborhood of the segment from $\left(  0,0,1\right)  $ to $\left(
a,b,1\right)  $ has the desired decay rate.

Without loss of generality, we assume that $a\neq0$, then we decompose the
operator dyadically according to $x_{1}\sim a2^{-i}$ and $y\sim2^{-k}$. Thus,
$T_{ik}$ is supported in
\[
x_{1}\sim a2^{-i}\left(  1-\varepsilon,1+\varepsilon \right)  ,x_{2}\sim
2^{-i}\left(  b-\varepsilon,b+\varepsilon \right)  ,y\sim2^{-k}.
\]
For any constant $C$, there is a small enough conic neighborhood of the point
such that $i\geq k+C$. Because of the Non-degeneracy of $h_{n-n_{s}}\left(
x_{1},x_{2}\right)  $, we have
\[
\left(  \frac{\partial}{\partial x_{1}}h_{n-n_{s}}\left(  a,b\right)
,\frac{\partial}{\partial x_{2}}h_{n-n_{s}}\left(  a,b\right)  \right)
\neq \left(  0,0\right)  .
\]
If $\frac{\partial}{\partial x_{1}}h_{n-n_{s}}\left(  a,b\right)  \neq0$, then
$\frac{\partial}{\partial x_{1}}h_{n-n_{s}}\left(  \frac{x_{1}}{y},\frac
{x_{2}}{y}\right)  \sim2^{-i\left(  n-n_{s}-1\right)  \left(  k-i\right)  }$
over the support of $T_{ik}$. It follws that
\[
S\left(  \frac{x_{1}}{y},\frac{x_{2}}{y},1\right)  =y^{n}\left(  h_{n-n_{1}%
}\left(  \frac{x_{1}}{y},\frac{x_{2}}{y}\right)  +\cdots+h_{n-n_{s}}\left(
\frac{x_{1}}{y},\frac{x_{2}}{y}\right)  \right)  \sim2^{\left(  n-n_{s}%
-1\right)  \left(  k-i\right)  }%
\]
over the support of $T_{ik}$ as the remainder terms have higher orders in
$\frac{x_{1}}{y}$ and$\frac{x_{2}}{y}$. Thus,
\[
S\left(  x_{1},x_{2},y\right)  \sim2^{-k\left(  n_{s}-1\right)  -i\left(
n-n_{s}-1\right)  }.
\]
Therefore, using the operator Van der Corput lemma in the $(x_{1},y)$
variables, and Young's inequality in $x_{2}$, we obtain
\begin{align}
\left \Vert T_{ik}\right \Vert  &  \lesssim \left(  \lambda2^{-k\left(
n_{1}-1\right)  -i\left(  n-n_{s}-1\right)  }\right)  ^{-1/2}2^{-i/2}%
\label{e}\\
&  =\lambda^{-1/2}2^{\left(  k\left(  n_{s}-1\right)  +i\left(  n-n_{s}%
-2\right)  \right)  /2}\nonumber
\end{align}
If $\frac{\partial}{\partial x_{2}}h_{n-n_{s}}\left(  a,b\right)  \neq0$,
then
\[
\frac{\partial^{2}S}{\partial x_{2}\partial y}\left(  x,y\right)  \sim
\lambda^{-1/2}2^{k\left(  n_{s}-1\right)  +i\left(  n-n_{s}-1\right)  }.
\]
Using the operator Van der Corput lemma in the $(x_{2},y)$ variables, and
Young's inequality in $x_{1}$, we obtain the same oscillatory estimate as
(\ref{e}).

On the other hand, Young's inequality in all variables yields the following
estimate based on the size of the support:
\[
\left \Vert T_{ik}\right \Vert \lesssim2^{-(2i+k)/2}.
\]
Summing over $2i+k=d,$ we obtain%
\begin{equation}
\left \Vert T_{d}\right \Vert =\left \Vert \sum \nolimits_{2i+k=d}T_{ik}%
\right \Vert \lesssim2^{-d/2}\nonumber
\end{equation}
and
\begin{align}
\left \Vert T_{d}\right \Vert  &  =\left \Vert \sum \nolimits_{2i+k=d}%
T_{ik}\right \Vert \lesssim \lambda^{-1/2}2^{\left(  k\left(  n_{1}-1\right)
-i\right)  /2}\\
&  \lesssim \lambda^{-1/2}2^{\left(  \left(  2i+k\right)  \left(
n_{1}-1\right)  \right)  /2}\lesssim \lambda^{-1/2}2^{d\left(  n_{1}-1\right)
/2}\nonumber
\end{align}

Again different components with the same $2i+k=d$ are almost orthogonal as
both $i$ and $k$ are different, thus the support in $x_{1}$ and $y$ are disjoint.

It follows that
\begin{align*}
\left \Vert T_{\lambda}\right \Vert  &  \leq \sum \nolimits_{d}\left \Vert
T_{d}\right \Vert \lesssim \sum \nolimits_{d}\min \left(  \lambda^{-1/2}%
2^{d\left(  n_{1}-1\right)  /2},2^{-d/2}\right) \\
&  \lesssim \lambda^{-1/\max \left(  \frac{2n}{3},n-n_{s}\right)  }\text{ or
}(\lambda^{-1/2}\log \left(  \lambda \right)  \text{ if }\max \left(  \frac{n}%
{3},\frac{n-n_{s}}{2}\right)  =1).
\end{align*}
Note that $\max \left(  \frac{n}{3},\frac{n-n_{s}}{2}\right)  =1$ happens only
when $n=3$ and $n_{s}=1.$%

\end{proof}%

\subsection{Singular points in general position}%

\begin{lemma}%
In a small conic neighborhood of a (singular) point $\left(  a,b,c\right)  $
with $\left(  a,b\right)  \neq \left(  0,0\right)  $ and $c\neq0$,\textbf{
}$\left \Vert T_{\lambda}\right \Vert \lesssim \lambda^{-3/\left(  2n\right)  }$
or ($\lambda^{-1/2}\log \left(  \lambda \right)  $ if $n=3$)$.$%
\end{lemma}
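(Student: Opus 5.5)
The plan is to reduce to a $(1+1)$-dimensional problem by straightening the singular ray, and then to argue as in the proof of Lemma 3 for nonsingular points. The dyadic decomposition in the distance to the origin is still done with (\ref{r}), but in each piece we keep the degeneracy of the mixed Hessian in the transverse direction.

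\emph{Setting up coordinates.} Since $c\neq0$ and $(a,b)\neq(0,0)$, make a linear change in $(x_1,x_2)$ only, which keeps the $L^2$ norm up to a constant. Choose it so that $(a,b)$ lies on the $x_1$-axis; then the singular ray is $x_1=ty$, $x_2=0$ with $t=a/c\neq0$. Write $x_1=y(t+u)$ and $x_2=yv$. Homogeneity gives
\[
S=y^nF(t+u,v),\qquad F(z_1,z_2)=\sum_j h_{n-n_j}(z_1,z_2),
\]
and in the conic neighbourhood $|u|,|v|<\varepsilon$.

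\emph{Key algebraic step.} The singularity conditions at $(t,0,1)$ read
\[
\partial_{x_1}\partial_yS=\partial_{x_2}\partial_yS=0.
\]
By Euler's relation these become
\[
(n-1)\nabla F-\nabla(z\cdot\nabla F)=0\quad\text{at }(t,0).
\]
I then intend to use this, together with the nondegeneracy assumptions, to show that the $2\times 2$ matrix of second derivatives of $\partial_yS$ in $(x_1,x_2)$ has a nonzero entry at the point. This says the mixed Hessian vanishes only to first order transversally to the ray. On the $k$-th dyadic piece ($y\sim2^{-k}$), subdivide further according to the distance $\rho\sim2^{-k-j}$ from the ray. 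There one expects
\[
|\partial_{x_l}\partial_yS|\sim2^{-(n-2)k}\,2^{-j}.
\]
Each piece has size $\sim 2^{-k}$ in $y$ and width $\sim 2^{-k-j}$ in the transverse variables.

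\emph{Estimates for each piece.} Apply the operator Van der Corput lemma (Lemma 3 of \citet{tang06}) in $(x_l,y)$ and Young's inequality in the remaining variable. This gives
\[
\|T_{kj}\|\lesssim\lambda^{-1/2}\,2^{(n-2)k/2+j/2}\,2^{-(k+j)/2}=\lambda^{-1/2}2^{(n-3)k/2}.
\]
The size estimate gives
\[
\|T_{kj}\|\lesssim2^{-(3k+2j)/2}.
\]
For fixed $k$, the pieces with different $j$ are almost orthogonal in $x$, but they share $y$. So I will sum in $j$ using the better of the two bounds, or use Cotlar–Stein to obtain a bound with no $j$-loss. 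The result should be
\[
\|T_k\|\lesssim\min\bigl(\lambda^{-1/2}2^{(n-3)k/2},\,2^{-3k/2}\bigr).
\]
Summing over $k$ exactly as in Lemma 3 then gives $\lambda^{-3/(2n)}$, or $\lambda^{-1/2}\log\lambda$ when $n=3$.

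\emph{Expected obstacles.} The first is the algebraic step: proving the Hessian of $\partial_yS$ does not vanish identically at a general-position singular point. If it degenerates to higher order $m$, I would redo the $j$-decomposition with $|\partial^2S|\sim2^{-(n-2)k}2^{-mj}$. The loss from $2^{mj/2}$ against the width $2^{-j}$ is then the issue, and this is exactly where the nondegeneracy of $h_{n-n_1}$ and $h_{n-n_s}$ must enter. The second is the summation in $j$ without a logarithmic loss, which I would handle with Cotlar–Stein, using that the $x$-supports of $T_{kj}$ are essentially disjoint.
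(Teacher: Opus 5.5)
Your key algebraic step is false, and the rest of the argument depends on it. Set $F_1(z)=\sum_j n_j h_{n-n_j}(z)$. Then $\partial_yS=y^{n-1}F_1(x/y)$ and $\nabla_x\partial_yS=y^{n-2}\nabla F_1(x/y)$. (Your Euler identity should therefore read $n\nabla F-\nabla(z\cdot\nabla F)=0$, not $(n-1)\nabla F-\cdots$.) General-position singular rays are exactly the critical points $z_0\neq0$ of $F_1$. Only the top and bottom homogeneous parts of $F_1$ are constrained, so these critical points can be degenerate. For instance, take $n=4$ and
\[
S=y\,(x_1^3+x_2^3)-\tfrac32\,y^2x_1^2+y^3x_1=\tfrac14x_1^4-\tfrac14(x_1-y)^4+\tfrac14y^4+y\,x_2^3 .
\]
Here $h_3=x_1^3+x_2^3$ and $h_1=x_1$ are nondegenerate, but $\nabla_x\partial_yS=\bigl(3(x_1-y)^2,\,3x_2^2\bigr)$. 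So at the general-position singular point $(1,0,1)/\sqrt2$ every entry of the $x$-Hessian of $\partial_yS$ vanishes, and the mixed Hessian vanishes to second order off the ray. Even when that Hessian is nonzero it can have rank one. An example is $n=5$, $S=y(3x_1^4-2x_1^2x_2^2+x_2^4)+y^2(-2x_1^3+x_1x_2^2+x_2^3)$ at $(1,0,1)$: there the Hessian is $\mathrm{diag}(12,0)$, and $|\nabla_x\partial_yS|$ grows linearly in one transverse direction but quadratically in the other. So your isotropic fallback $|\partial^2S|\sim 2^{-(n-2)k}2^{-mj}$ is not available either. The nondegeneracy of $h_{n-n_1}$ and $h_{n-n_s}$ does not rule any of this out. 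What actually bounds the degeneracy is $\deg F_1=n-n_1\le n-1$, which forces $m\le n-2$.

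This matters quantitatively, not just technically. In the $n=4$ example, your two bounds with $m=2$ (after splitting each annulus into sectors where one component of $\nabla F_1$ dominates) give $\|T_k\|\lesssim\lambda^{-3/8}$ for every $k$ with $2^{4k}\lesssim\lambda$. This cannot be improved: testing $T_k$ on $e^{-i\lambda(x_1^4/4+2^{-k}x_2^3)}$ times the indicator of $\{|x_1-2^{-k}|,|x_2|\le c\lambda^{-1/4}\}$ gives $\|T_k\|\gtrsim\lambda^{-3/8}$ when $2^{4k}\ll\lambda$. More generally, isotropic vanishing of order $m\ge2$ gives a per-shell bound of $2^{-3k/2}\min\{1,(\lambda2^{-nk})^{-3/(2(m+2))}\}$, which sums geometrically in $k$ only if $m<n-2$.

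So summing over $k$ exactly as in Lemma 3 yields only $\lambda^{-3/8}\log\lambda$ here, whereas the lemma (and Theorem 1, since $\delta=4/3>1$) asserts $\lambda^{-3/8}$. To close the argument you need two things:
\begin{enumerate}
\item a decomposition adapted to the actual, possibly anisotropic, vanishing of $\nabla F_1$ at $z_0$;
\item orthogonality between shells. Near a general-position direction, $y\sim2^{-k}$ forces $|x|\sim2^{-k}$, so $T_k$ and $T_{k'}$ have disjoint $x$- and $y$-supports for $|k-k'|\ge3$, and hence $\|\sum_kT_k\|\lesssim\sup_k\|T_k\|$.
\end{enumerate}

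Separately, your Cotlar--Stein step in $j$ is unjustified as stated. For fixed $k$ the $x$-projections of the $T_{kj}$ are nested, not disjoint: all contain the segment $\{(ty_0,0):y_0\sim2^{-k}\}$, and their $y$-projections coincide. When the vanishing is first order, the crude $j$-sum costs only a factor $1+\log(\lambda2^{-nk})$, which is harmless for $n>3$. For $n=3$ the only case is $S=yh_2+y^2h_1$, whose $x$-Hessian of $\partial_yS$ is $D^2h_2$ and hence invertible. There you should regroup by $k+j$, since pieces with equal $k+j$ and well-separated $k$ do have disjoint supports. The paper itself simply defers to Tang's argument, so it offers no explicit computation to compare with, but your route does not go through as written.
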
%
%

\begin{proof}%
By close inspection it is clear that the proof of \citet{tang06} in the cases
of singular points in general position only used the assumption that
$h_{n-n_{1}}\left(  x_{1},x_{2}\right)  $ and $h_{n-n_{s}}\left(  x_{1}%
,x_{2}\right)  $ are non-degenerate, and not rely on the assumption that
$n_{1}\leq \frac{n}{3}$ or $n_{s}\geq \frac{n}{3}$. Thus, this situation can be
proved exactly the same way as that in \citet{tang06}. This completes the
proof of Theorem 1.%

\end{proof}%

Note that throughout the proof, we actually obtain different decay rates for
the operator within small conic neighborhoods of different directions. Next,
we prove that the decay rates we obtained for each direction is sharp, thus
the overall decay rate in Theorem 1 is also sharp.

\section{Sharpness of the decay rate}

We use test functions to show that the decay rates given in the last section
are optimal if $\Phi(0,0,0)\neq0$. Without loss of generality we can assume
that $\Phi(x_{1},x_{2},y)>1$ on a neighborhood of the origin.

For any point $\left(  a,b,c\right)  $, we can simply use the test function%
\begin{equation}
f_{\lambda}(x_{1},x_{2})=%
\begin{cases}
1, & x_{1}\in \lambda^{-1/n}\left(  -\varepsilon,\varepsilon \right)  ,x_{2}%
\in \lambda^{-1/n}\left(  1-\varepsilon,1+\varepsilon \right)  ,\\
0, & \text{otherwise,}%
\end{cases}
\label{t1}%
\end{equation}
to get the lower bound of $\lambda^{-3/2n}.$ Here $\varepsilon$ is a small
positive number such that $|S\left(  x_{1},x_{2},y\right)  -S\left(
a,b,c\right)  |<\eta$ for a small enough positive number $\eta$ for all
$x_{1}\in \left(  a-\varepsilon,a+\varepsilon \right)  $, $x_{2}\in \left(
b-\varepsilon,b+\varepsilon \right)  ,$ and $y\in \left(  c-\varepsilon
,c+\varepsilon \right)  $. Then,
\begin{equation}
\left \Vert f_{\lambda}\right \Vert \leq \lambda^{-1/n}.\label{gg}%
\end{equation}
and for any point $y\in \lambda^{-1/n}\left(  1-\varepsilon,1+\varepsilon
\right)  $ , we have $|\lambda S\left(  x_{1},x_{2},y\right)  -S\left(
a,b,c\right)  |=|S\left(  \lambda^{1/n}x_{1},\lambda^{1/n}x_{2},\lambda
^{1/n}y\right)  -S\left(  a,b,c\right)  |<\eta \ $over the rectangle $x_{1}%
\in \lambda^{-1/n}\left(  -\varepsilon,\varepsilon \right)  ,$ $x_{2}\in
\lambda^{-1/n}\left(  1-\varepsilon,1+\varepsilon \right)  .$ Thus, for a small
enough $\eta$ we have either the real or the imaginary part of $e^{i\lambda
S\left(  x_{1},x_{2},y\right)  }>c$ , a positive constant not depending on
$\lambda.$ So, for $y\in \lambda^{-1/n}\left(  1-\varepsilon,1+\varepsilon
\right)  $ we have
\begin{align*}
|T_{\lambda}f_{\lambda}\left(  y\right)  | &  =|\iint e^{i\lambda S}\Phi
(x_{1},x_{2},y)f_{\lambda}(x_{1},x_{2})dx_{1}dx_{2}|\\
&  \geq c\int_{x_{1}\in \lambda^{-1/n}\left(  -\varepsilon,\varepsilon \right)
,x_{2}\in \lambda^{-1/n}\left(  1-\varepsilon/2,1+\varepsilon/2\right)
}f_{\lambda}(x_{1},x_{2})dx_{1}dx_{2}|\\
&  \gtrsim \lambda^{-2/n}.
\end{align*}
Thus,$\ $
\[
\left \Vert T_{\lambda}f_{\lambda}\right \Vert ^{2}\geq \int_{y\in \lambda
^{-1/n}\left(  1-\varepsilon,1+\varepsilon \right)  }T_{\lambda}f_{\lambda
}\left(  y\right)  ^{2}dy\gtrsim \lambda^{-5/n}.
\]
and%
\begin{equation}
\left \Vert T_{\lambda}f_{\lambda}\right \Vert \gtrsim \lambda^{-5/2n}.\label{ff}%
\end{equation}
By (\ref{gg}) and (\ref{ff}), we have
\[
\left \Vert T_{\lambda}\right \Vert \geq \frac{\left \Vert T_{\lambda}f_{\lambda
}\right \Vert }{\left \Vert f_{\lambda}\right \Vert }\gtrsim \frac{\lambda
^{-5/2n}}{\lambda^{-1/n}}=\lambda^{-3/2n}.
\]
This shows that for nonsingular points the optimal decay rates are exactly
$\lambda^{-3/2n}$, except for a factor of log when $n=3.$

\subsection{Singular points on the X-plane}

For any singular point $\left(  a,b,0\right)  $ on the X-plane, we need only
to consider the case $\max \left(  n_{1},\frac{n}{3}\right)  =n_{1}$, as the
case $\max \left(  n_{1},\frac{n}{3}\right)  =\frac{n}{3}$ is covered by the
test function (\ref{t1}) above.

By the continuity of $y^{n_{1}}h_{n-n_{1}}\left(  x_{1},x_{2}\right)  $ at
$\left(  a,b,0\right)  $, for any $\eta>0$ there exists $\varepsilon>0$ such
that $|y^{n_{1}}h_{n-n_{1}}\left(  x_{1},x_{2}\right)  |<\frac{\eta}{2}$ for
all $x_{1}\in \left(  a-\varepsilon,a+\varepsilon \right)  ,x_{2}\in \left(
b-\varepsilon,b+\varepsilon \right)  ,$ and $y\in \left(  -\varepsilon
,+\varepsilon \right)  $. We choose the following test function%
\[
f(x_{1},x_{2})=%
\begin{cases}
1, & x_{1}\in \left(  a-\varepsilon,a+\varepsilon \right)  ,x_{2}\in \left(
b-\varepsilon,b+\varepsilon \right)  \\
0, & \text{otherwise.}%
\end{cases}
\]
Since the test function does not change with $\lambda$, its norm is a
constant. For any point $y$ with $\lambda^{1/n_{1}}y\in \left(  -\varepsilon
,\varepsilon \right)  $ and any $x_{1}\in \left(  a-\varepsilon,a+\varepsilon
\right)  $ and $x_{2}\in \left(  b-\varepsilon,b+\varepsilon \right)  ,$ we have%

\[
|\lambda y^{n_{1}}h_{n-n_{1}}\left(  x_{1},x_{2}\right)  |<\frac{\eta}{2}.
\]
On the other hand, for any point $y\ $with $\lambda^{1/n_{1}}y\in \left(
-\varepsilon,\varepsilon \right)  $, we have $y^{m}\sim o\left(  \lambda
^{-1}\right)  $ if $m>n_{1},$ thus
\begin{align*}
&  |\lambda S\left(  x_{1},x_{2},y\right)  -\lambda y^{n_{1}}h_{n-n_{1}%
}\left(  x_{1},x_{2}\right)  |\\
&  =|\lambda \left(  y^{n_{2}}h_{n-n_{2}}\left(  x_{1},x_{2}\right)
+\cdots+y^{n_{s}}h_{n-n_{s}}\left(  x_{1},x_{2}\right)  \right)  |<\frac{\eta
}{2},
\end{align*}
for $\lambda$ large enough. It follows that
\begin{align*}
&  |\lambda S\left(  x_{1},x_{2},y\right)  |\leq|\lambda S\left(  x_{1}%
,x_{2},y\right)  -\lambda y^{n_{1}}h_{n-n_{1}}\left(  x_{1},x_{2}\right)  |\\
&  +|\lambda y^{n_{1}}h_{n-n_{1}}\left(  x_{1},x_{2}\right)  -c^{n_{1}%
}h_{n-n_{1}}\left(  a,b\right)  |<\eta
\end{align*}

Thus, the real part of $e^{i\lambda S\left(  x_{1},x_{2},y\right)  }%
>\cos \left(  \eta \right)  $ (if we choose any $\eta<1$), a positive constant
not depending on $\lambda.$ So, for $\lambda^{1/n_{1}}y\in \left(
-\varepsilon,\varepsilon \right)  $
\begin{align*}
|T_{\lambda}f\left(  y\right)  | &  =|\iint e^{i\lambda S}\Phi(x_{1}%
,x_{2},y)f(x_{1},x_{2})dx_{1}dx_{2}|\\
&  \geq|\cos \left(  \eta \right)  \int_{x_{1}\in \left(  a-\varepsilon
,a+\varepsilon \right)  ,x_{2}\in \left(  b-\varepsilon,b+\varepsilon \right)
}f(x_{1},x_{2})dx_{1}dx_{2}|\\
&  \gtrsim c.
\end{align*}
Thus,$\ $
\[
\left \Vert T_{\lambda}f\right \Vert ^{2}\geq \int \nolimits_{y\in \lambda
^{-1/n_{1}}\left(  1-\varepsilon,1+\varepsilon \right)  }|T_{\lambda}f\left(
y\right)  |^{2}dy\gtrsim \lambda^{-1/n_{1}}.
\]
and%
\begin{equation}
\left \Vert T_{\lambda}f\right \Vert \gtrsim \lambda^{-1/2n_{1}}.\label{f}%
\end{equation}
It follows that
\[
\left \Vert T_{\lambda}\right \Vert \geq \frac{\left \Vert T_{\lambda}f\right \Vert
}{\left \Vert f\right \Vert }\gtrsim \lambda^{-1/2n_{1}}.
\]
This proves the sharpness of the decay rate in (\ref{d}).

\subsection{Singular points on the Y-axis}

For the singular point $\left(  0,0,1\right)  ,$ we need only to consider the
case $\max \left(  \frac{n-n_{s}}{2},\frac{n}{3}\right)  =\frac{n-n_{s}}{2}$,
as the case $\max \left(  \frac{n-n_{s}}{2},\frac{n}{3}\right)  =\frac{n}{3}$
is again covered by the test function (\ref{t1}). Let
\[
f_{\lambda}(x_{1},x_{2})=%
\begin{cases}
1, & x_{1}\in \lambda^{-1/\left(  n-n_{s}\right)  }\left(  -\varepsilon
,\varepsilon \right)  ,x_{2}\in \lambda^{-1/\left(  n-n_{s}\right)  }\left(
-\varepsilon,\varepsilon \right)  \\
0, & \text{otherwise}.
\end{cases}
\]
where $\varepsilon$ is a small positive number such that $|S\left(
x_{1},x_{2},y\right)  |<\frac{\eta}{2}$ for a small enough positive number
$\eta$ for all $x_{1}\in \left(  -\varepsilon,\varepsilon \right)  ,x_{2}%
\in \left(  -\varepsilon,\varepsilon \right)  ,$ and $y\in \left(  1-\varepsilon
,1+\varepsilon \right)  $. Then,
\begin{equation}
\left \Vert f_{\lambda}\right \Vert ^{2}=\int f_{\lambda}(x_{1},x_{2})^{2}%
dx_{1}dx_{2}\sim \lambda^{-2/\left(  n-n_{s}\right)  }\text{ and }\left \Vert
f_{\lambda}\right \Vert _{L^{2}\left(  \mathbb{R}\right)  }\sim \lambda
^{-1/\left(  n-n_{s}\right)  }.\label{lll}%
\end{equation}
For any point $y\in \left(  1-\varepsilon,1+\varepsilon \right)  $, we have%

\[
|\lambda y^{n_{s}}h_{n-n_{s}}\left(  x_{1},x_{2}\right)  |<\frac{\eta}{2}.
\]
On the other hand, for any point $x_{1}\in \lambda^{-1/\left(  n-n_{s}\right)
}\left(  -\varepsilon,\varepsilon \right)  ,x_{2}\in \lambda^{-1/\left(
n-n_{s}\right)  }\left(  -\varepsilon,\varepsilon \right)  ,$ we have
$h_{m}\left(  x_{1},x_{2}\right)  \lambda \sim o\left(  \lambda^{-1}\right)  $
for any $m<n_{s},$ thus
\begin{align*}
&  |\lambda S\left(  x_{1},x_{2},y\right)  -\lambda y^{n_{s}}h_{n-n_{s}%
}\left(  x_{1},x_{2}\right)  |\\
&  =|\lambda \left(  y^{n_{1}}h_{n-n1}\left(  x_{1},x_{2}\right)
+\cdots+y^{n_{s}}h_{n-n_{s}}\left(  x_{1},x_{2}\right)  \right)  |<\frac{\eta
}{2}.
\end{align*}
It follows that
\[
|\lambda S\left(  x_{1},x_{2},y\right)  |<|\lambda y^{n_{s}}h_{n-n_{s}}\left(
x_{1},x_{2}\right)  |+|\lambda S\left(  x_{1},x_{2},y\right)  -\lambda
y^{n_{s}}h_{n-n_{s}}\left(  x_{1},x_{2}\right)  |<\eta
\]
over the rectangle $x_{1}\in \left(  1-\varepsilon/2,1+\varepsilon/2\right)
,x_{2}\in \left(  1-\varepsilon/2,1+\varepsilon/2\right)  .$ Thus, the real
part of $e^{i\lambda S\left(  x_{1},x_{2},y\right)  }>\cos \left(  \eta \right)
$, a positive constant not depending on $\lambda$, if we choose any $\eta<1.$
So, for $y\in \left(  1-\varepsilon,1+\varepsilon \right)  $ we have
\begin{align*}
|T_{\lambda}f_{\lambda}\left(  y\right)  | &  \geq \cos \left(  \eta \right)
\iint_{x_{1}\in \lambda^{-1/\left(  n-n_{s}\right)  }\left(  -\varepsilon
,\varepsilon \right)  ,x_{2}\in \lambda^{-1/\left(  n-n_{s}\right)  }\left(
-\varepsilon,\varepsilon \right)  }|f_{\lambda}(x)|dx_{1}dx_{2}\\
&  \gtrsim \lambda^{-2/\left(  n-n_{s}\right)  },
\end{align*}
and$\ $
\begin{equation}
\left \Vert T_{\lambda}f_{\lambda}\right \Vert \geq \left[  \int_{y\in \left(
1-\varepsilon,1+\varepsilon \right)  }|T_{\lambda}f_{\lambda}\left(  y\right)
|^{2}dy\right]  ^{1/2}\gtrsim \lambda^{-2/\left(  n-n_{s}\right)
}.\label{lll2}%
\end{equation}
By (\ref{lll}) and (\ref{lll2}), we have
\[
\left \Vert T_{\lambda}\right \Vert \geq \frac{\left \Vert T_{\lambda}f_{\lambda
}\right \Vert }{\left \Vert f_{\lambda}\right \Vert }\gtrsim \frac{\lambda
^{-2/\left(  n-n_{s}\right)  }}{\lambda^{-1/\left(  n-n_{s}\right)  }}%
=\lambda^{-1/\left(  n-n_{s}\right)  }.
\]
This completes the proof of the optimality part of Theorem 2.

\section{Relation with the Newton distance}

Since the phase function is a homogeneous polynomial of degree $n$, the three
coordinates of any convex combination of the vertices in the Newton polyhedron
always add up to $n.$ Thus, the Newton distance $\geq$ $\frac{n}{3}$. Next, we
prove the Newton distance according to which of the numbers $\left(  \frac
{n}{3},\frac{n-n_{s}}{2},n_{1}\right)  $ takes the maximum value.

\textbf{Case I. }$\delta=\frac{n}{3}$. In such cases, $n_{1}\leq \frac{n}{3}$
and $n_{s}\geq$ $\frac{n}{3}$. For a non-degenerate homogenous polynomial of
$x_{1}$ and $x_{2}$ with degree $m$, the corresponding Newton polygon of the
polynomial on the $\left(  x_{1},x_{2}\right)  $ plane contains both points
$\left(  1,m-1\right)  $ and $\left(  m-1,1\right)  $, and hence the point
$\left(  \frac{m}{2},\frac{m}{2}\right)  ,$ if $m>1.$ Thus, by the
non-degeneracy of the homogeneous polynomials $h_{n-n_{1}}$ and $h_{n-n_{s}},$
the Newton polyhedron of contains points
\begin{equation}
P_{1}=\left(  \frac{n-n_{1}}{2},\, \frac{n-n_{1}}{2},\,n_{1}\right)  ,\qquad
P_{2}=\left(  \frac{n-n_{s}}{2},\, \frac{n-n_{s}}{2},\,n_{s}\right)  ,
\label{pp}%
\end{equation}
if $n_{s}<n-1.$ Since%
\[
\frac{n_{s}-\frac{n}{3}}{n_{s}-n_{1}}\left(  \frac{n-n_{1}}{2},\,
\frac{n-n_{1}}{2},\,n_{1}\right)  +\frac{\frac{n}{3}-n_{1}}{n_{s}-n_{1}%
}\left(  \frac{n-n_{s}}{2},\, \frac{n-n_{s}}{2},\,n_{s}\right)  =\left(
\frac{n}{3},\frac{n}{3},\frac{n}{3}\right)  \text{,}%
\]
it follows that the Newton distance is $\frac{n}{3}$.

If $n_{s}=n-1$, then $h_{n-n_{1}}$ is a non-degenerate homogeneous polynomials
with degree larger than 1, the Newton polyhedron contains points
\[
P_{3}=\left(  1,\,n-n_{1}-1,\,n_{1}\right)  ,\qquad P_{4}=\left(
n-n_{1}-1,1,\,n_{1}\right)  .
\]
However, $h_{n-n_{s}}$ has only degree one, thus it may only contain one of
the two points $\left(  1,\,0,\,n-1\right)  $ and $\left(  0,\,1,\,n-1\right)
.$ However, the following convex combination%
\[
\frac{n-3n_{1}}{3(n-n_{1}-1)}P_{3}+\frac{n^{2}-nn_{1}-3n+3}{3(n-n_{1}%
-1)(n-n_{1}-2)}P_{4}+\frac{n-3}{3(n-n_{1}-2)}P_{5}=\left(  \frac{n}{3}%
,\frac{n}{3},\frac{n}{3}\right)  \text{,}%
\]
where $P_{5}=\left(  1,\,0,\,n-1\right)  ,$ shows that the Newton distance is
$\frac{n}{3}$.

\textbf{Case II. }$\delta=n_{1}.$ In such case $n_{1}\geq$ $\frac{n}{3},$ then
all the vertices of the Newton Polygon have the y-coordinate at least $n_{1}$,
it follows that the Newton distance $\geq$ $n_{1}$. On the other hand, note
that the point $P_{1}=\left(  \frac{n-n_{1}}{2},\, \frac{n-n_{1}}{2}%
,\,n_{1}\right)  $ is in the Newton polyhedron if $h_{n-n_{1}}$ has a degree
at least 2, i.e., $n_{1}<n-1$. If $n_{1}=n-1$, then one of the two points
$\left(  1,\,0,\,n_{1}\right)  $ and $\left(  0,\,1,\,n_{1}\right)  $ is in
the Newton polyhedron. Hence, the Newton distance is $n_{1}$.

\textbf{Case III. }$\delta=\frac{n-n_{s}}{2}.$ In such case $n_{s}\leq \frac
{n}{3}$, then all the vertices of the Newton Polygon have the total of the two
x-coordinates at least $n-n_{s}.$ Thus, the Newton distance will be at least
$\frac{n-n_{s}}{2}.$ On the other hand, since $h_{n-n_{s}}$ has a degree at
least two, the point $\left(  \frac{n-n_{s}}{2},\, \frac{n-n_{s}}{2}%
,\,n_{s}\right)  $ is in the Newton polyhedron. Since $n_{s}\leq \frac{n-n_{s}%
}{2}$ in this case, it follows $\frac{n-n_{s}}{2}$ is indeed the Newton
distance. This completes the proof that the Newton distance of the phase
function is $\delta.$

This shows that the Newton distance of the phase functions in Theorem 1 is
determined by $n,$ $n_{1}$, and $n_{s}$. Consequently, it is invariant under
affine transformations of $\left(  x_{1},x_{2}\right)  $, and the modified
Newton distance of the phase functions is also $\delta.$ This completes the
proof of Theorem 2.

\textbf{Acknowledgement.} We would like to thank Prof. A. Greenleaf at the
University of Rochester for carefully reading an earlier version of the
manuscript and for providing valuable suggestions that significantly improved it.

\end{document}